\definecolor{tocolor}{rgb}{.1,.1,.1}
\definecolor{urlcolor}{rgb}{.2,.2,.6}
\definecolor{linkcolor}{rgb}{.1,.1,.5}
\definecolor{citecolor}{rgb}{.4,.2,.1}
\newcommandx{\thdef}[2]{
	\newaliascnt{#1}{theorem}  
	\newtheorem{#1}[#1]{#2}
	\aliascntresetthe{#1}  
	\newtheorem*{#1*}{#2}
	\expandafter\newcommand\expandafter{\csname #1autorefname\endcsname}{#2}
}
\newtheorem*{rep@theorem}{\rep@title}
\newcommand{\newreptheorem}[2]{%
\newenvironment{rep#1}[1]{%
 \def\rep@title{#2 \ref{##1}}%
 \begin{rep@theorem}}%
 {\end{rep@theorem}}}
\newtheorem{theorem}{Theorem}[section]
\theoremstyle{definition}
\theoremstyle{remark}
\theoremstyle{remark}
\newenvironment{example}
{\begin{ex}}%
{\hfill $\blacksquare$\end{ex}}
\newcommand{\spc}[1]{\mathsf{#1}} 
\newcommand{\shf}[1]{\mathcal{#1}} 
\newcommand{\rbrac}[1]{\left(#1\right)} 
\newcommandx{\fn}[2][2=]{#1\ifthenelse{\equal{#2}{}}{}{\!\rbrac{{#2}}}} 
\newcommandx{\id}[2][2=]{\fn{{\rm id}_{#1}}[#2]} 
\newcommand{\ext}[2][\bullet]{\spc{\Lambda}^{#1}{#2}} 
\newcommandx{\End}[2][1=]{\fn{\spc{End}_{#1}}[#2]} 
\newcommandx{\Hom}[2][1=]{\fn{\spc{Hom}_{#1}}[#2]} 
\newcommandx{\Aut}[2][1=]{\fn{\spc{Aut}_{#1}}[#2]} 
\newcommandx{\image}[1]{\fn{\spc{img}}[#1]} 
\renewcommandx{\ker}[1]{\fn{\spc{ker}}[#1]} 
\newcommandx{\rank}[1]{\fn{\mathrm{rank}}[#1]} 
\newcommandx{\ann}[1]{\fn{\spc{ann}}[#1]} 
\newcommandx{\hlgy}[3][1=\bullet,3=]{\spc{H}_{#1}^{#3}\!\rbrac{{#2}}} 
\newcommandx{\cohlgy}[3][1=\bullet,3=]{\spc{H}^{#1}_{#3}\!\rbrac{{#2}}} 
\newcommandx{\chow}[3][1=\bullet,3=]{\spc{A}^{#1}_{#3}\!\rbrac{{#2}}} 
\newcommandx{\Ext}[3][1=\bullet,3=]{\fn{\spc{Ext}^{#1}_{#3}}[{#2}]} 
\newcommandx{\Tor}[3][1=\bullet,3=]{\fn{\spc{Tor}^{#1}_{#3}}[{#2}]} 
\newcommandx{\Pic}[1]{\fn{\spc{Pic}}[{#1}]} 
\newcommandx{\chernalg}[2][1=\bullet]{\fn{\spc{Chern}^{#1}}[{#2}]} 
\newcommandx{\chern}[2][1=]{\fn{c_{#1}}[#2]} 
\newcommandx{\ch}[2][1=]{\fn{\mathrm{ch}_{#1}}[{#2}]} 
\newcommandx{\sKer}[2][1=]{ \fn{ \shf{K}er_{#1}}[{#2}] } 
\newcommandx{\sHom}[2][1=]{ \fn{ \shf{H}om_{#1}}[{#2}] } 
\newcommandx{\sEnd}[2][1=]{ \fn{ \shf{E}nd_{#1}}[{#2}] } 
\newcommandx{\sExt}[3][1=\bullet,3=]{\fn{\shf{E}xt^{#1}_{#3}}[{#2}]} 
\newcommandx{\sTor}[3][1=\bullet,3=]{\fn{\shf{T}or^{#1}_{#3}}[{#2}]} 
\newcommandx{\forms}[2][1=\bullet]{\Omega^{#1}_{#2}} 
\newcommandx{\can}[1][1=]{\omega_{#1}} 
\newcommandx{\acan}[1][1=]{\omega_{#1}^{-1}} 
\newcommandx{\tshf}[1]{\shf{T}_{#1}} 
\newcommandx{\mvect}[2][1=\bullet]{ \ext[#1]{\tshf{#2}} }
\newcommandx{\der}[2][1=\bullet]{\mathscr{X}^{#1}_{#2}} 
\newcommandx{\sJet}[3][1=,2=]{\shf{J}^{#1}_{#2}#3} 
\newcommandx{\tb}[2][1=]{\spc{T}_{\!#1}{#2}} 
\newcommandx{\ctb}[2][1=]{\spc{T}_{\!#1}^*{#2}} 
\newcommandx{\lie}[2][2=]{\fn{\mathscr{L}_{#1}}[#2]} 
\newcommandx{\hook}[2][2=]{\fn{i_{#1}}[#2]} 
\newcommand{\Rep}[1]{\fn{\cat{R}ep}[#1]}
\newcommand{\thickbar}{\mathpalette\@thickbar}
\newcommand{\@thickbar}[2]{{#1\mkern1.5mu\vbox{
  \sbox\z@{$#1\mkern-1mu#2\mkern-1mu$}%
  \sbox\tw@{$#1\overline{#2}$}%
  \dimen@=\dimexpr\ht\tw@-\ht\z@-.6\p@\relax
  \hrule\@height.4\p@ 
  \vskip1\p@
  \hrule\@height.4\p@ 
  \vskip\dimen@
  \box\z@}\mkern1.5mu}
}
\def\Rep{\text{Rep}}
\def\ker{\text{ker}}
\def\End{\text{End}}
\def\log{\text{log}}
\newcommand{\bb}[1]{\mathbb{#1}}
\numberwithin{equation}{section}
\newtheoremstyle{parag}
  {\topsep}   
  {\topsep}   
  {}  
  {}       
  {\bfseries} 
  {.}         
  { } 
  {}          
\theoremstyle{parag}
\def\@cite#1#2{{\normalfont[{#1\if@tempswa , #2\fi}]}}
\renewcommand{\Pic}{\mathrm{Pic}}
\def\Aut{\text{Aut}}
\begin{document}

\title{\vspace{-4em} \huge Castling equivalence for logarithmic flat connections}

\date{}

\author{
Francis Bischoff\thanks{Exeter College and Mathematical Institute, University of Oxford; {\tt francis.bischoff@maths.ox.ac.uk }}
}
\maketitle
\abstract{Let $X$ be a complex manifold containing a hypersurface $D$ and let $D^s$ denote the singular locus. We study the problem of extending a flat connection with logarithmic poles along $D$ from the complement $X \setminus D^s$ to all of $X$. In the setting where $D$ is a weighted homogeneous plane curve we give a new proof of Mebkhout's theorem that extensions always exist. Our proof makes use of a Jordan decomposition for logarithmic connections as well as a version of Grothendieck's decomposition theorem for vector bundles over the `football' orbifold which is due to  Martens and Thaddeus. In higher dimensions we point out a close relationship between the extension problem and castling equivalence of prehomogeneous vector spaces. In particular, we show that the twisted fundamental groupoids of castling equivalent linear free divisors are `birationally' Morita equivalent and we use this to generate examples of non-extendable flat connections. }

\tableofcontents

\section{Introduction}
Let $X$ be a complex manifold containing a hypersurface $D$. A version of the Riemann-Hilbert problem asks whether a flat connection on the complement $U = X \setminus D$ admits an extension to a flat connection on $X$ with logarithmic singularities along $D$. In the case where $X = \bb{CP}^1$ is the complex projective line and $D$ is a collection of points, this question was posed by Hilbert and has been extensively studied (eg. \cite{MR0174815, MR0090529, MR0982153, MR0086958,  MR1032917}). When the hypersurface $D$ is smooth, or has simple normal crossing singularities, there is a construction due to Deligne (attributed to Manin) \cite{deligne2006equations, MR882000} which provides a canonical extension given the choice of a (set-theoretic) branch of the logarithm.\footnote{Because this construction uses matrix logarithms, it only works when the structure group is $GL(m, \bb{C})$. For other structure groups there are counterexamples to the Riemann-Hilbert problem even when the divisor is smooth.} In this paper we study the following different but related \emph{extension problem} in the setting where $D$ is a singular Saito free divisor \cite{saito1980theory}. Let $D^{s}$ denote the singular locus of $D$, let $X^\times = X \setminus D^{s}$ and let $D^{\times} = D \setminus D^{s}$. Hence $D^{\times}$ is a smooth hypersurface in the complex manifold $X^\times$. The spaces involved are arranged according to the following diagram of inclusions. 
\[
\begin{tikzcd} [column sep=-3pt,row sep=10pt]
D^s & = & D^s &  \\
D \arrow[u, phantom, sloped, "\supset"]& \subset & X \arrow[u, phantom, sloped, "\supset"]& \supset & U \\
D^\times \arrow[u, phantom, sloped, "\subset"]& \subset & X^\times \arrow[u, phantom, sloped, "\subset"] & \supset & U \arrow[u, phantom, sloped, "\equal"]
\end{tikzcd}
\]

\begin{question*}[Extension problem]
Let $(E, \nabla)$ be a logarithmic flat connection on $(X^\times, D^{\times})$. Is there an extension $(\tilde{E}, \tilde{\nabla})$ to a logarithmic flat connection on $(X,D)$?
\end{question*}

Note that by Hartogs' theorem, the extension is unique and its existence depends only on whether the vector bundle $E$ can be extended. By applying the Deligne-Manin construction, any flat connection on $U$ may be extended to a logarithmic flat connection $(E, \nabla)$ on $(X^\times, D^{\times})$, since $D^{\times}$ is smooth. Hence, solving the extension problem in this case immediately leads to a solution of the Riemann-Hilbert problem. However, the two problems are also quite distinct: there are logarithmic flat connections $(E, \nabla)$ which fail to extend even though their restriction to $U$ \emph{does} extend. 

An illustrative example of this phenomenon is provided by $(\mathbb{C}^n, D)$, where $D$ is the union of the coordinate hyperplanes. In this setting, logarithmic flat connections with trivial monodromy are equivalent to $(\mathbb{C}^*)^n$-equivariant vector bundles, and these have been completely classified as a special case of the study of equivariant vector bundles over toric varieties \cite{MR1011361, MR1171283, MR1024452, MR376680, MR961215, MR1663340, MR1729338}. In this classification, an equivariant vector bundle over $\mathbb{C}^n \setminus D^{s}$ is equivalent to the data of a vector space $V$ which is equipped with an $n$-tuple of decreasing $\mathbb{Z}$-filtrations $F_{1}, F_{2}, ..., F_{n}$. This equivariant bundle extends over $\mathbb{C}^n$ if and only if the tuple of filtrations can be simultaneously split. Note that restricting the bundle to $\mathbb{C}^n \setminus D$ corresponds to throwing away the filtrations. This restricted bundle can then be extended all the way to $\mathbb{C}^n$ by now choosing an $n$-tuple of simultaneously split filtrations (for example, choosing $n$-copies of a single filtration). In other words, this provides examples of non-extendable connections on $(\mathbb{C}^n \setminus D^{s}, D^\times)$ whose restriction to $U = \mathbb{C}^n \setminus D$ does nevertheless admit an extension. 

\subsection*{Mebkhout's extension}
The case $n = 2$ is noteworthy. Indeed, since it is always possible to simultaneously split a pair of filtrations, equivariant bundles over $\mathbb{C}^2 \setminus \{ (0,0) \}$ may always be extended. In fact, this is a special case of a theorem of Mebkhout \cite{MR2077649} (see also a similar result by Kita \cite{MR0541894}), who proved that the extension problem can always be solved when $D$ is a reduced curve in $\mathbb{C}^2$. The first purpose of this paper is to give a new proof of this result in the case where $D$ is a weighted homogeneous plane curve. This is the content of Theorem \ref{extensiontheorem}. Whereas Mebkhout's original proof relied on the theory of coherent sheaves, our proof makes use of a groupoid theoretic approach to logarithmic connections which was developed in \cite{bischoff2022normal}. Briefly, our proof proceeds in two steps: 
\begin{enumerate}
\item Using a Jordan decomposition theorem, we show in Proposition \ref{inducedCstaraction} that any logarithmic flat connection $(E, \nabla)$ admits the structure of a $\mathbb{C}^*$-equivariant bundle. Note that this is the same $\mathbb{C}^*$-action with respect to which $D$ is weighted homogeneous. As a result, $E$ descends to a vector bundle on a `football' orbifold $\mathbb{P}^{1}_{p,q}$. 
\item Applying Martens and Thaddeus' version of the Grothendieck decomposition theorem for vector bundles over the football orbifolds \cite{martens2012variations} we decompose $E$ into a sum of line bundles, which can easily be seen to extend to $\mathbb{C}^2$. 
\end{enumerate}
From this perspective, the existence of non-extendable connections may be attributed to the failure of the Grothendieck decomposition theorem in higher dimensions. Indeed, Proposition \ref{inducedCstaraction} continues to hold in higher dimensions, implying that there is a close relationship between the extension problem for weighted homogeneous hypersurfaces and the study of vector bundles on weighted projective space. 

\subsection*{Castling equivalence}
In Section \ref{CastlingEq} we turn our attention to the second purpose of this paper, which is to point out a relationship between the extension problem and castling equivalence for prehomogeneous vector spaces. Castling equivalence is an operation on linear representations which first arose in the work of Sato and Kimura on the classification of prehomogeneous vector spaces \cite{MR430336}. It was related to free divisors in the work of \cite{MR2228227, MR2795728, MR2521436, MR3237442}, who introduced \emph{linear} free divisors (a special case of prehomogeneous vector spaces) and showed that they are preserved by castling equivalence. A noteworthy application of their result is that castling equivalence generates infinitely many new examples of linear free divisors. 

Now suppose that $(X_{1},D_{1})$ and $(X_{2}, D_{2})$ are castling equivalent linear free divisors and let $(X^\times_{i}, D_{i}^{\times})$ denote the respective complements of their singular loci. Each pair $(X^\times_{i}, D^{\times}_{i})$ gives rise to a holomorphic Lie groupoid $\Pi(X^\times_{i}, D^{\times}_{i})$ which has the property that its category of representations is equivalent to the category of flat connections on $X^\times_{i}$ with logarithmic singularities along $D^\times_{i}$. Our main result is Theorem \ref{castleMorita}, which states that \emph{twisted fundamental groupoids} derived from castling equivalent linear free divisors are Morita equivalent. In particular, they have equivalent categories of representations. The significance of this theorem is that it implies that the categories of logarithmic flat connections for $(X_{1},D_{1})$ and $(X_{2}, D_{2})$ can both be embedded into a common category. Furthermore, by transporting a flat logarithmic connection for $(X_{2}, D_{2})$ along the Morita equivalence, we obtain a flat logarithmic connection for $(X^{\times}_{1}, D^{\times}_{1})$ which \emph{may fail to extend}. This is demonstrated in Proposition \ref{castlingembedding} which provides a simple method to construct non-extendable connections from the data of representations of special linear groups. 

\vspace{.1in}

\noindent \textbf{Acknowledgements.} 
I would like to thank L. Narv\'{a}ez Macarro for informing me of Mebkhout's result. I would also like to thank M. Gualtieri for suggesting several improvements to the paper. 

\section{Homogeneous Lie groupoids} \label{homogeneousgrps}
In this section we briefly recall homogeneous groupoids and some of their representation theory, as developed in \cite{bischoff2022normal}. Let $X$ be a complex manifold equipped with a $(\mathbb{C}^*)^k$-action and let $\mathcal{G} \rightrightarrows X$ be a holomorphic Lie groupoid. The action determines a source-simply connected action groupoid $\mathbb{C}^{k} \ltimes X$ and we assume that there is a groupoid morphism 
\[
i : \mathbb{C}^k \ltimes X \to \mathcal{G}. 
\]
There is a natural projection morphism $p: \mathbb{C}^k \ltimes X \to \mathbb{C}^k$ and we assume that we have an extension of this morphism to $\mathcal{G}$: 
\[
\pi : \mathcal{G} \to \mathbb{C}^k. 
\]
Note that the subgroup $2 \pi i \mathbb{Z}^k \subset \mathbb{C}^k$ determines an isotropic subgroupoid $\mathbb{Z}^k \times X \subset \mathcal{G}$. 

\begin{definition}
Let $(\mathcal{G}, i, \pi)$ be a triple as above. This data 
\begin{itemize}
\item is \emph{central} if for all $n \in \mathbb{Z}^k$ and $g \in \mathcal{G}$, we have 
\[
(n, t(g)) \ast g = g \ast (n, s(g)). 
\]
\item has a \emph{unique s-equivalence class} if every point of $X$ is equivalent under the equivalence relation generated by the $\mathcal{G}$-orbit closures. 
\end{itemize}
The data $(\mathcal{G}, i, \pi)$ defines a \emph{homogeneous groupoid} if it is both central and has a unique $s$-equivalence class. It is furthermore called \emph{positive} in the case that $X$ is a vector space and one of the $\mathbb{C}^{*}$-actions has strictly positive weights. 
\end{definition}

In \cite{bischoff2022normal} several examples of homogeneous groupoids are described. They arise from representations of algebraic groups and as the twisted fundamental groupoids of weighted homogeneous Saito free divisors. Let us recall a proposition which will be used in a future section. 
\begin{proposition}\cite[Proposition 3.4]{bischoff2022normal} \label{equivgroupoidstr}
Let $(\mathcal{G}, i, \pi)$ be a homogeneous groupoid. Then the conjugation action of $\mathbb{C}^{k}$ descends to an action of $(\mathbb{C}^{*})^{k}$ on $\mathcal{G}$ by Lie groupoid automorphisms. Furthermore, the morphism $\pi$ is invariant under this action. 
\end{proposition}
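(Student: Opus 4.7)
The plan is to construct the $\mathbb{C}^k$-action explicitly by conjugating with the bisections provided by $i$, then use the centrality hypothesis to show that $2\pi i \mathbb{Z}^k$ acts trivially, so that the action descends to $(\mathbb{C}^*)^k = \mathbb{C}^k / 2\pi i \mathbb{Z}^k$. For each $v \in \mathbb{C}^k$ I would define $\Phi_v : \mathcal{G} \to \mathcal{G}$ by
\[
\Phi_v(g) := i(v, t(g)) \ast g \ast i(v, s(g))^{-1}.
\]
Since $i(v,x)$ is an arrow from $x$ to $\phi_v(x)$, where $\phi$ denotes the $\mathbb{C}^k$-action on $X$, this is a well-defined holomorphic map $\mathbb{C}^k \times \mathcal{G} \to \mathcal{G}$ that covers the action $\phi$ on the object space.

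Using the composition identity $i(v_1, \phi_{v_2}(x)) \ast i(v_2, x) = i(v_1 + v_2, x)$ in the action groupoid together with $i(0, x) = 1_x$, a direct check gives $\Phi_{v_1} \circ \Phi_{v_2} = \Phi_{v_1 + v_2}$ and $\Phi_0 = \mathrm{id}_\mathcal{G}$, exhibiting a holomorphic $\mathbb{C}^k$-action on $\mathcal{G}$. To verify that each $\Phi_v$ is a Lie groupoid automorphism, I would observe that for composable $g_1, g_2$ with $s(g_1) = t(g_2)$, the middle factor $i(v, s(g_1))^{-1} \ast i(v, t(g_2))$ collapses to the identity arrow at $\phi_v(s(g_1))$, yielding $\Phi_v(g_1 \ast g_2) = \Phi_v(g_1) \ast \Phi_v(g_2)$; compatibility with units and inverses is immediate from the same cancellation.

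The essential step is descent: since the exponential map $\mathbb{C}^k \to (\mathbb{C}^*)^k$ has kernel $2\pi i \mathbb{Z}^k$, it suffices to show $\Phi_n = \mathrm{id}_\mathcal{G}$ for every $n$ in this lattice. But the centrality hypothesis $(n, t(g)) \ast g = g \ast (n, s(g))$ rearranges to exactly $i(n, t(g)) \ast g \ast i(n, s(g))^{-1} = g$, i.e. $\Phi_n(g) = g$, so the action factors through $(\mathbb{C}^*)^k$. For the invariance of $\pi$, the identity $\pi \circ i = p$ combined with the fact that $\mathbb{C}^k$ is abelian gives $\pi(\Phi_v(g)) = v + \pi(g) - v = \pi(g)$. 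The proof is a direct unpacking of definitions and structural identities in the action groupoid and poses no substantive obstacle; the main conceptual point is that centrality is precisely what is needed to force the lattice $2\pi i \mathbb{Z}^k$ to act trivially, and the ``unique $s$-equivalence class'' part of the homogeneous groupoid hypothesis plays no role here.
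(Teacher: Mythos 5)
Your argument is correct: conjugation by the bisections $i(v,-)$ defines the $\mathbb{C}^k$-action by groupoid automorphisms, centrality is precisely the statement that the lattice $2\pi i\mathbb{Z}^k$ acts trivially so the action descends to $(\mathbb{C}^*)^k$, and $\pi$-invariance follows from $\pi\circ i = p$ together with additivity of $\pi$. The paper itself gives no proof here (the proposition is imported by citation from the earlier work), but your construction is the evident intended one --- the definition of ``central'' is tailored exactly to make this descent work --- and you are right that the unique $s$-equivalence class hypothesis is not needed for this statement.
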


\subsection*{Monodromy and the Jordan decomposition}
Let $P \to X$ be a right principal $H$-bundle, where $H$ is a connected complex reductive group. Recall that the $H$-equivariant isomorphisms between the fibres of $P$ assemble into the Atiyah groupoid $\mathcal{G}(P)$, which is a Lie groupoid over $X$. A representation of the groupoid $\mathcal{G}$ consists of a principal bundle $P$ and a morphism $\phi: \mathcal{G} \to \mathcal{G}(P)$ covering the identity. In the case of a homogeneous groupoid we can restrict to the subgroupoid $\mathbb{Z}^{k} \times X$ to get the monodromy, which we view as a map 
\[
M : \mathbb{Z}^{k} \to Aut_{H}(P), 
\]
where $Aut_{H}(P)$ is the group of gauge transformations of $P$. In fact, the image of the monodromy lies in the subgroup $Aut(\phi)$ of automorphisms of $\phi$. Let $S$ and $U$ be the semisimple and unipotent components of $M$, respectively, defined according to the multiplicative Jordan-Chevalley decomposition. In \cite[Lemma 4.3]{bischoff2022normal} we show that $S$ and $U$ are well-defined holomorphic automorphisms of $\phi$, and that the conjugacy class of $S_{n}$, for $n \in \mathbb{Z}^k$, is constant over $X$. We furthermore prove the following functorial Jordan decomposition theorem. Let $Rep(\mathcal{G}, H)$ be the category of $H$-representations of $\mathcal{G}$, and let $\mathcal{JC}_{H}$ denote the category whose objects are triples $(P, \phi_{s}, U)$, where $(P, \phi_{s})$ is a representation with semisimple monodromy, and $U: \mathbb{Z}^{k} \to Aut(\phi_{s})$ is a morphism valued in unipotent automorphisms. 
\begin{theorem}\cite[Theorem 4.6]{bischoff2022normal} \label{JCdecomp}
There is an isomorphism of categories 
\[
Rep(\mathcal{G}, H) \cong \mathcal{JC}_{H}. 
\]
\end{theorem}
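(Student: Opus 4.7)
The plan is to exploit the Jordan decomposition of the monodromy from the preceding lemma together with the groupoid morphism $\pi:\mathcal{G}\to\mathbb{C}^k$ to split off the unipotent part of any representation as an ``external'' piece of data. In the forward direction I would send $(P,\phi)$ to $(P,\phi_s,U)$, where $U$ is the unipotent monodromy and $\phi_s$ is a twist of $\phi$ carrying semisimple monodromy $S$. To define the twist, observe that since $U:\mathbb{Z}^k\to Aut(\phi)$ takes values in a commutative unipotent Lie subgroup, the logarithms $V_i=\log U(e_i)$ of its generators are well-defined, commute pairwise, and produce a unique holomorphic extension $\tilde U:\mathbb{C}^k\to Aut(\phi)$ given by $\tilde U(z)=\exp\bigl(\sum_i z_iV_i\bigr)$. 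Using $\pi$, I would then set
\[
\phi_s(g)\;:=\;\tilde U(\pi(g))^{-1}\cdot\phi(g),
\]
where $\tilde U(\pi(g))^{-1}$ is evaluated at the target fibre $P_{t(g)}$.

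The main verifications are that $\phi_s$ is a groupoid morphism, that its monodromy is $S$, and that $U$ remains an automorphism of $\phi_s$. Multiplicativity of $\phi_s$ uses the multiplicativity of $\pi$ combined with the identity $\tilde U(\pi(g))\,\phi(g)=\phi(g)\,\tilde U(\pi(g))$, which holds because $\tilde U$ takes values in $Aut(\phi)$: one commutes the inner $\tilde U$-factor past $\phi(g_1)$ to merge it with the outer factor via the group law of $\tilde U$. Restricting to $\mathbb{Z}^k\times X$ yields monodromy $\tilde U(n)^{-1}M(n)=U(n)^{-1}S(n)U(n)=S(n)$, since $S$ and $U$ commute. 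The statement $U(n)\in Aut(\phi_s)$ then follows because $U(n)$ commutes with $\phi(g)$ by hypothesis and with $\tilde U(\pi(g))$ by abelianness of the unipotent group in which they both sit.

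In the opposite direction I would send $(P,\phi_s,U)\in\mathcal{JC}_H$ to $(P,\phi)$ with $\phi(g):=\tilde U(\pi(g))\cdot\phi_s(g)$, extending $U$ to $\tilde U$ as before and verifying multiplicativity by a mirror calculation (now using $U\in Aut(\phi_s)$). The two constructions are mutually inverse by direct computation: one composition collapses via $\tilde U(\pi(g))^{-1}\tilde U(\pi(g))=\mathrm{id}$, while the other appeals to uniqueness of the Jordan decomposition of the monodromy to recover $(\phi_s,U)$ from the reconstructed $\phi$. Functoriality is automatic, since any isomorphism of representations intertwines the monodromies and hence their semisimple and unipotent pieces.

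The principal obstacle I expect is the bookkeeping around evaluation of gauge transformations at source versus target fibres, together with the repeated invocation of the $Aut(\phi)$-commutation relation needed to shift $\tilde U$-factors across $\phi$ during the multiplicativity calculation. A secondary subtlety is justifying the holomorphic extension $\tilde U$: one must verify that the image of $U$ lies in a commutative unipotent subgroup of $Aut_H(P)$ on which the exponential is a biholomorphism, which rests on the structure of the gauge group and on the holomorphicity of the Jordan pieces established in the preceding lemma.
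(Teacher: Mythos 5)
Your proposal reconstructs exactly the mechanism the paper sketches: the groupoid $1$-cocycle $\sigma_U$ that ``untwists'' the unipotent monodromy is precisely your $g\mapsto \tilde U(\pi(g))^{-1}$, built from the commuting logarithms of the unipotent Jordan factors together with the projection $\pi:\mathcal{G}\to\mathbb{C}^k$, and your verification of multiplicativity, of the semisimplicity of the new monodromy, and of mutual inverseness via uniqueness of the Jordan decomposition follows the same route as the cited proof. The argument is correct, up to the harmless normalization $V_i=\tfrac{1}{2\pi i}\log U(e_i)$ needed so that $\tilde U$ actually restricts to $U$ on the subgroupoid $\mathbb{Z}^k\times X$, whose image under $\pi$ is $2\pi i\,\mathbb{Z}^k$.
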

Let us briefly describe the isomorphism. Given a representation $(P, \phi)$ we assign to it the triple $(P, \phi_{s}, U)$, where $U$ is the unipotent part of the monodromy of $\phi$, and $\phi_{s} = \sigma_{U} \phi$, where $\sigma_{U}$ is a groupoid $1$-cocycle which `untwists' the unipotent monodromy. 

\subsection*{Equivariant structures}
We now describe an application of Theorem \ref{JCdecomp} which allows us to construct torus equivariant bundles from groupoid representations. We start by assuming that the homogeneous groupoid has the form $\mathbb{C} \ltimes X \to \mathcal{G} \to \mathbb{C}$. Note that any homogeneous groupoid gives rise to one of this form by forgetting some of the $\mathbb{C}^*$-actions. For simplicity, we will assume that the structure group is $H = GL(m, \mathbb{C})$. 

Recall from Proposition \ref{equivgroupoidstr} that there is an induced $\mathbb{C}^{*}$-action on the groupoid $\mathcal{G}$. The following result allows us to lift this action to any representation of $\mathcal{G}$. 

\begin{proposition} \label{inducedCstaraction}
Let $(P, \phi)$ be a $GL(m, \mathbb{C})$-representation of $\mathcal{G}$ and choose $S \in GL(m, \mathbb{C})$ in the conjugacy class of the semisimple component of the monodromy of $\phi$. Then 
\begin{itemize}
\item $(P, \phi)$ admits a reduction of structure group to the centralizer $C_{GL(m, \mathbb{C})}(S)$. Denote the resulting representation $(K, \phi)$. 
\item The $\mathbb{C}^*$-action on $X$ (and $\mathcal{G}$) lifts to an equivariant action on $K$ which acts by automorphisms of $\phi$. 
\end{itemize}
In particular, $P$ admits the structure of a $\mathbb{C}^*$-equivariant bundle over $X$. 
\end{proposition}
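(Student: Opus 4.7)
The plan is to prove the two assertions in sequence, building on the Jordan decomposition of Theorem \ref{JCdecomp} and the constancy of the semisimple monodromy's conjugacy class proved in Lemma 4.3 of \cite{bischoff2022normal}.

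For the first assertion, I would fix $S \in GL(m, \mathbb{C})$ in the conjugacy class of the semisimple monodromy $S_1 \in \mathrm{Aut}(\phi)$, and define
\[
K := \{\, p \in P \mid S_1(p) = p \cdot S \,\},
\]
the subbundle of frames in which $S_1$ takes the fixed standard form $S$. The constancy of its conjugacy class as a gauge transformation ensures that $K \to X$ is surjective, and any two such frames over the same point differ by an element of $GL(m,\mathbb{C})$ commuting with $S$, so $K$ is a principal $C_{GL(m,\mathbb{C})}(S)$-bundle. Since $S_1$ is an automorphism of $\phi$, every $\phi(\gamma)$ commutes with the gauge action of $S_1$, hence $\phi(\gamma)(K) \subseteq K$, giving the representation $(K, \phi)$ with the claimed structure group.

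For the second assertion, I would start with the naive lift $t \cdot_0 p := \phi(i(t, \pi(p)))(p)$ of the $\mathbb{C}$-action, which fails to be a $\mathbb{C}^*$-action because the loop $t \mapsto t + 2\pi i$ acts by the full monodromy $M = SU$. The idea is to untwist using the Jordan decomposition of $M$ inside $\mathrm{Aut}(\phi)$: choose $L \in \mathrm{Lie}(C_{GL(m,\mathbb{C})}(S))$ with $e^{2\pi i L} = S$ (possible because $S$ is central in the reduced structure group) and let $N = \log U$ be the nilpotent logarithm of the unipotent gauge transformation $U$. Both the right multiplication by $e^{-tL}$ (a central element of the structure group) and the gauge transformation $\exp(-tN/(2\pi i))$ lie in $\mathrm{Aut}(\phi)$. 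Setting
\[
t \cdot p := \exp\!\bigl(-\tfrac{t}{2\pi i} N\bigr)\bigl(\phi(i(t, \pi(p)))(p)\bigr) \cdot e^{-tL}
\]
produces a $\mathbb{C}$-action on $K$ whose value at $t = 2\pi i$ is the identity, since the two untwisting factors exactly invert the semisimple and unipotent parts of $M$; hence it descends to a $\mathbb{C}^*$-action.

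The step I expect to require the most care is verifying that this action is by automorphisms of $\phi$, equivalently that $\phi$ intertwines the $\mathbb{C}^*$-action on $\mathcal{G}$ (arising from conjugation by $i$, as in Proposition \ref{equivgroupoidstr}) with the action just defined on $K$. The two untwisting factors commute with every $\phi(\gamma)$ — the first because it is central in the structure group, the second because $N$ is an (infinitesimal) automorphism of $\phi$ by construction of the Jordan decomposition — so the verification reduces to the compatibility of $\phi \circ i$ with the conjugation action on $\mathcal{G}$, which is forced by the centrality of $\mathbb{Z} \times X$ in $\mathcal{G}$. A direct cocycle calculation, keeping careful track of base points, should then complete the argument.
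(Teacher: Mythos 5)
Your proof is correct and follows essentially the same route as the paper: reduce to the centralizer subbundle $K$ cut out by the semisimple monodromy, then cancel the residual monodromy of the lifted $\mathbb{C}$-action by right-translating along a one-parameter subgroup $e^{-tL}$ with $e^{2\pi i L}=S$ and $L$ central in $\mathrm{Lie}(C_{GL(m,\mathbb{C})}(S))$ (which is where $H=GL(m,\mathbb{C})$ is genuinely used). The only difference is presentational: where you untwist the unipotent part by hand with the gauge transformations $\exp\!\bigl(-\tfrac{t}{2\pi i}\log U\bigr)$, the paper invokes Theorem \ref{JCdecomp} and works with the semisimplified representation $\phi_{s}=\sigma_{U}\phi$, whose restriction to $\mathbb{C}\ltimes X$ is exactly your untwisted action.
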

\begin{proof}
First, by Theorem \ref{JCdecomp}, the representation $\phi$ admits a factorisation $(P, \phi_{s}, U)$, where $\phi_{s}$ has semisimple monodromy. More precisely, $\psi = i^*(\phi_{s}) : \mathbb{C} \ltimes X \to \mathcal{G}(P)$ is a representation and the monodromy $M(x) = \psi(2 \pi i,x)$ is a semisimple automorphism of $\phi$. By \cite[Lemma 4.3]{bischoff2022normal}, the conjugacy class of $M$ is constant along $X$. Let $S \in GL(m, \mathbb{C})$ be a representative of this element and define 
\[
K = \{ p \in P \ | \ M p = p \ast S \}. 
\]
We claim that $K$ is a holomorphic reduction of structure group to $C_{GL(m,\bb{C})}(S)$, the centralizer of $S$. Indeed, it is possible to choose a local trivialization $P|_{U} \cong U \times GL(m, \bb{C})$ such that $M|_{U} = S$. Then 
\[
K|_{U} \cong \{ (u, h) \in U \times GL(m, \bb{C}) \ | \ S h = h S \} = U \times C_{GL(m,\bb{C})}(S). 
\]
Note that since $M$ is an automorphism of $\phi$, it follows that $\phi$ preserves $K$. The representation $\psi$ defines an equivariant $\mathbb{C}$-action on $K$ (or $P$) which lifts the given action on $X$. We will now modify it so that it descends to a $\mathbb{C}^*$-action. Let $A \in Lie(Z(C_{GL(m,\bb{C})}(S)))$, the Lie algebra of the centre, such that $\exp(2 \pi i A) = S$. Now define 
\[
\tilde{\psi} : \mathbb{C} \ltimes X \to \mathcal{G}(K), \qquad (\lambda, x) \mapsto R_{\exp(-\lambda A)} \psi(\lambda, x), 
\]
where $R_{\exp(-\lambda A)} : K \to K$ denotes the right action of $\exp(-\lambda A) \in C_{GL(m,\bb{C})}(S)$ on $K$. Note that the map $\tilde{\psi}(\lambda, x) : K_{x} \to K_{\lambda \ast x}$ is $C_{GL(m,\bb{C})}(S)$-equivariant precisely because $\exp(-\lambda A)$ lies in the centre of this group. Hence this defines a holomorphic action. For $p \in K$ 
\[
\tilde{\psi}(2\pi i, x)(p) = \psi(2 \pi i,x)(p) \ast e^{-2 \pi i A} = M(x)(p) \ast S^{-1} = p.
\]
Therefore, the action descends to an action of $\mathbb{C}^*$. In order to show that $\mathbb{C}^*$ acts by automorphisms of $\phi$, we need to check that for all $\mu \in \mathbb{C}$, $g \in \mathcal{G}$ and $p \in K_{s(g)}$, the following equation is satisfied 
\[
\mu \ast \phi(g)(p) = \phi(\mu \ast g) (\mu \ast p). 
\]
Unpacking the definitions, this equation is given by 
\[
\phi_{s}(\mu, t(g)) \phi(g)(p) \ast e^{-\mu A} = \phi( (\mu, t(g)) g (\mu, s(g))^{-1}) \phi_{s}(\mu, s(g))(p) \ast e^{-\mu A}. 
\]
Cancelling off the factor $e^{-\mu A}$, and using the definition of $\phi_{s}$ from \cite[Section 4.2]{bischoff2022normal}, the equation simplifies to 
\[
c(\mu)_{\mu \ast t(g)} \phi( (\mu, t(g)) g ) = \phi( (\mu, t(g)) g (\mu, s(g))^{-1}) c(\mu)_{\mu \ast s(g)} \phi( \mu, s(g)),
\]
which follows from the fact that $c(\mu) \in Aut(\phi)$. 
\end{proof}

Taking a closer look at the proof of Proposition \ref{inducedCstaraction}, we see that there is only one place where it was necessary to assume that the structure group $H = GL(m,\bb{C})$. Namely, we must be able to find a logarithm of $S$ in the centre of its centralizer $\log(S) \in Lie(Z(C_{GL(m,\bb{C})}(S)))$. Such logarithms exist because the eigenspace decomposition of $S$ implies that $C_{GL(m,\bb{C})}(S)$ is a product of general linear groups and so its centre is a connected torus. For general structure groups this property can fail. For example, $-id \in SL(2, \mathbb{C})$ is contained in the non-identity component of the centre. This does not admit a central logarithm and hence leads to counter-examples of Proposition \ref{inducedCstaraction} in the case of structure group $SL(2, \mathbb{C})$. 

Let $\phi$ be an $H$-representation, for $H$ a general complex reductive group. Let $S \in H$ be an element of the conjugacy class of the semisimple component of its monodromy. We will say that $\phi$ has `well-behaved' monodromy if $S$ is contained in the identity component of $Z(C_{H}(S))$. The condition of having well-behaved monodromy is necessary and sufficient for Proposition \ref{inducedCstaraction} to hold in this more general setting. Because the centralizer of a semisimple matrix in $GL(m, \mathbb{C})$ is a product of general linear groups and has connected centre, we can iterate the construction of Proposition \ref{inducedCstaraction} in the more general setting of a homogeneous groupoid associated to a $(\mathbb{C}^*)^{k}$-action. 

\begin{corollary} \label{inducedtoric}
Proposition \ref{inducedCstaraction} holds for a general homogeneous groupoid. In particular, a $GL(m, \mathbb{C})$-representation $(P, \phi)$ of a homogeneous groupoid $\mathbb{C}^{k} \ltimes X \to \mathcal{G} \to \mathbb{C}^k$ admits an equivariant $(\mathbb{C}^*)^{k}$-action. 
\end{corollary}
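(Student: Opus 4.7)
The plan is to mimic the proof of Proposition \ref{inducedCstaraction} by replaying it in all $k$ directions simultaneously. The key structural fact, already noted in the paragraph preceding the corollary, is that the class of groups of the form $\prod_j GL(m_j, \mathbb{C})$ is closed under taking centralizers of semisimple elements, and that every such group has connected centre. Equivalently, one may proceed by induction on $k$ by iterating Proposition \ref{inducedCstaraction}, since a single application reduces the structure group from $GL(m, \mathbb{C})$ to a product of general linear groups, for which the proposition applies again.

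Concretely, I would first apply Theorem \ref{JCdecomp} to produce the factorization $(P, \phi_s, U)$ with semisimple monodromy $M : \mathbb{Z}^k \to Aut(\phi_s)$. Since $\mathbb{Z}^k$ is abelian, the images $S_1, \dots, S_k$ of the standard generators form a commuting family of semisimple automorphisms; after trivializing $P$ at a point they identify with commuting semisimple matrices in $GL(m,\mathbb{C})$ and are therefore simultaneously diagonalizable. Let $H' := \bigcap_{i=1}^{k} C_{GL(m,\mathbb{C})}(S_i)$ denote their joint centralizer; by the simultaneous eigenspace decomposition $H' \cong \prod_j GL(m_j, \mathbb{C})$, its centre $Z(H')$ is a connected torus, and each $S_i$ lies in $Z(H')$. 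Define
\[
K := \{\, p \in P \ | \ M(n) p = p \ast S^{n} \text{ for all } n \in \mathbb{Z}^k \,\},
\]
which, by a coordinate-wise repetition of the local trivialization argument from Proposition \ref{inducedCstaraction} together with the constancy of the simultaneous conjugacy class of $(S_1,\dots,S_k)$ provided by \cite[Lemma 4.3]{bischoff2022normal}, is a holomorphic reduction of $P$ to $H'$ preserved by $\phi$. Choose commuting logarithms $A_1, \dots, A_k \in \mathrm{Lie}(Z(H'))$ with $\exp(2\pi i A_j) = S_j$, which is possible because $Z(H')$ is a connected abelian Lie group. Setting $\psi := i^*(\phi_s)$ and
\[
\tilde\psi(\lambda, x) := R_{\exp(-\sum_j \lambda_j A_j)} \circ \psi(\lambda, x),
\]
one verifies, exactly as in the single-direction case, that $\tilde\psi$ descends to a $(\mathbb{C}^*)^k$-equivariant structure on $K$, and hence on $P$.

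The main technical hurdle is checking the three compatibilities one coordinate at a time: that $\tilde\psi$ actually lands in $H'$-equivariant fibre isomorphisms of $K$, that $\tilde\psi(2\pi i e_j, -) = \mathrm{id}$ for every $j$ so the action factors through $(\mathbb{C}^*)^k$ rather than $\mathbb{C}^k$, and that the resulting torus action commutes with $\phi$. Each reduces to a direct repetition of the corresponding computation in the proof of Proposition \ref{inducedCstaraction}; the essential input that lets the $k$ arguments run in parallel without interfering is that all the logarithms $A_j$ are taken in the common centre $Z(H')$, so the right-translation twists $R_{\exp(-\lambda_j A_j)}$ commute pairwise and are $H'$-equivariant. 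The alternative inductive approach sidesteps this by simply applying Proposition \ref{inducedCstaraction} once per direction, using at each stage that the current ambient group is a product of general linear groups and that the remaining $(\mathbb{C}^*)^{k-\ell}$-action still fits into a homogeneous groupoid of the required form.
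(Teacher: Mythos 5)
Your proposal is correct and is essentially the paper's argument: the paper ``proves'' this corollary in the single sentence preceding it, namely that the centralizer of a semisimple element of $GL(m,\mathbb{C})$ is again a product of general linear groups with connected centre, so Proposition \ref{inducedCstaraction} can simply be iterated once per $\mathbb{C}^*$-factor --- which is exactly your alternative inductive route. Your primary (simultaneous) version is a legitimate unrolling of the same idea, though note that it quietly needs constancy over $X$ of the \emph{simultaneous} conjugacy class of the tuple $(S_1,\dots,S_k)$, which is slightly stronger than the statement of \cite[Lemma 4.3]{bischoff2022normal} for each $S_n$ individually; the iterative version sidesteps this entirely, which is presumably why the paper phrases it that way.
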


\section{Extending representations}
Let $X$ be a complex manifold, let $D \subset X$ be a Saito free divisor \cite{saito1980theory} and let $U = X \setminus D$. Recall from \cite[Section 3.2]{bischoff2022normal} that this determines a twisted fundamental groupoid $\Pi(X,D)$. The key fact for us is that the category of representations of $\Pi(X,D)$ is \emph{equivalent} to the category of flat connections on $X$ with logarithmic singularities along $D$. Let $D^s \subset D$ denote the singular locus of $D$, and let $X^\times = X \setminus D^s$ and $D^{\times} = D \setminus D^s$. Then $(X^\times,D^{\times})$ defines a smooth Saito free divisor with the property that $X^\times \setminus D^{\times} = X \setminus D = U$. 

In this section we study the problem of extending a logarithmic flat connection on $(X^\times, D^{\times})$ to a logarithmic flat connection on $(X,D)$. We will study this in terms of the corresponding groupoid representations. Recall that the orbits of the groupoid $\Pi(X, D)$ define a stratification of $X$. In \cite{saito1980theory}, this is called the \emph{logarithmic stratification}. The connected components of $U$ and $D^{\times}$ are orbits and hence the singularity locus $D^s$ decomposes as a union of orbits. Therefore, $X^\times$ is a saturated open subset of $X$ and $\Pi(X,D)|_{X^\times} = \Pi(X^\times,D^{\times})$ is a subgroupoid of $\Pi(X,D)$. Restricting representations defines a functor
\[
R : Rep(\Pi(X,D), H) \to Rep(\Pi(X^\times,D^{\times}), H).
\]
The extension problem which we study in this section can be formulated as the problem of characterising the essential image of the functor $R$. When $X = \mathbb{C}^n$ the following lemma shows that this problem reduces to that of determining when the principal bundle $P$ underlying a representation of $\Pi(X^\times,D^{\times})$ is trivializable. 

\begin{lemma} \label{extensionlemma}
The functor $R$ is fully faithful. If $X = \mathbb{C}^n$, then its image consists of representations defined on trivializable bundles. 
\end{lemma}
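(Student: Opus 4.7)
The underlying observation for both claims is that $D^s$ is a proper analytic subset of the reduced hypersurface $D$, so has complex codimension at least two in $X$; this means that Hartogs' extension theorem applies to holomorphic sections defined over $X^\times = X \setminus D^s$.

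For faithfulness of $R$, two morphisms of representations over $X$ that agree on the dense open subset $X^\times$ must be equal, as they are holomorphic bundle maps on a connected complex manifold. For fullness, given a morphism $f \colon (P_1|_{X^\times}, \phi_1|_{X^\times}) \to (P_2|_{X^\times}, \phi_2|_{X^\times})$ in $Rep(\Pi(X^\times, D^\times), H)$, the plan is to view $f$ as a holomorphic section, over $X^\times$, of the locally trivial fiber bundle $\mathrm{Iso}_H(P_1, P_2) \to X$ (which is defined on all of $X$ because both $P_1$ and $P_2$ are). On a local trivialization this bundle has fiber $H$; after choosing a faithful linear representation $H \hookrightarrow GL(N, \mathbb{C})$, the entries of $f$ become holomorphic functions on an open set in $X$ whose complement has codimension $\geq 2$, and hence extend across $D^s$ by Hartogs' theorem. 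The local extensions glue by uniqueness into a global morphism $\tilde f \colon P_1 \to P_2$ of principal $H$-bundles on $X$. The intertwining relation $\phi_2(g) \tilde f_{s(g)} = \tilde f_{t(g)} \phi_1(g)$ is a holomorphic identity on $\Pi(X, D)$ which holds on the open dense subgroupoid $\Pi(X^\times, D^\times)$ (open and dense because the source and target are submersions and $X^\times \subset X$ is open and dense), and hence holds on all of $\Pi(X, D)$ by analytic continuation.

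For the second claim, when $X = \mathbb{C}^n$ the Oka--Grauert principle immediately applies: $\mathbb{C}^n$ is Stein and contractible, so every holomorphic principal $H$-bundle on it is topologically trivial, and on a Stein manifold topological and holomorphic classifications of principal bundles with complex Lie group structure group coincide. Consequently every underlying bundle $P$ of a representation of $\Pi(X,D)$ is globally trivial, and in particular $P|_{X^\times}$ is trivializable.

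The main subtlety I anticipate is in the extension step, since $\mathrm{Iso}_H(P_1, P_2)$ is not a vector bundle and Hartogs' theorem is a statement about holomorphic functions; however, passing to local trivializations together with a linear embedding of $H$ reduces the problem to the classical scalar version. A minor additional point is that $H$-equivariance of the extension $\tilde f$ is automatic, since it holds on the dense open $X^\times$ and both sides of the equivariance identity are holomorphic.
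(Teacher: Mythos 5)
Your argument for fully faithfulness is essentially the paper's: both reduce to the fact that $D^s$ (and hence $s^{-1}(D^s) \subset \Pi(X,D)$) has codimension at least two, apply Hartogs' theorem in local trivializations after embedding $H$ linearly, and extend the intertwining identity by density. That part is fine.

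The gap is in the second claim. The assertion that the image ``consists of representations defined on trivializable bundles'' is a two-sided characterization: the image is \emph{exactly} the class of representations of $\Pi(X^\times, D^\times)$ whose underlying bundle is trivializable. You prove only the easy inclusion (restrictions of representations over $\mathbb{C}^n$ have trivializable bundles, via Oka--Grauert), and omit the converse, which is the direction the paper actually uses: in the proof of Theorem \ref{extensiontheorem} the whole point is that once one shows $P$ is trivializable over $X^\times$, the representation \emph{extends}. The missing step is: given $(P,\phi) \in Rep(\Pi(X^\times,D^\times),H)$ with $P \cong X^\times \times H$, the representation becomes a groupoid homomorphism $\phi \colon \Pi(X^\times,D^\times) \to H$; since $D^s$ is a union of orbits, $\Pi(X,D) \setminus \Pi(X^\times,D^\times) = s^{-1}(D^s)$ has codimension $\geq 2$ in the arrow manifold, so Hartogs' theorem (applied on $\Pi(X,D)$ itself, not just on $X$) extends $\phi$ to a holomorphic map $\Pi(X,D) \to H$, and the homomorphism property persists by analytic continuation. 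Without this converse the lemma does not support its intended application, so you should add this direction explicitly.
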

\begin{proof}
The theorem is trivial if $D^{s}$ is empty. Assuming that it is non-empty, it is an analytic subset of codimension $c \geq 2$. Because $D^{s}$ is a union of orbits, we have that $\Pi(X, D)|_{D^{s}} = s^{-1}(D^{s})$, where $s: \Pi(X,D) \to X$ is the source map. Hence $\Pi(X, D)|_{D^{s}}$ is an analytic subset of $\Pi(X,D)$ of codimension $c$. By Hartogs' theorem \cite[Theorem 5B]{MR0387634} any holomorphic function on $X^\times$ (resp. $\Pi(X^\times,D^{\times})$) extends uniquely to a holomorphic function on $X$ (resp. $\Pi(X,D)$). 

The functor $R$ is clearly faithful since $X^\times$ is dense in $X$. To see that it is full, note that a homomorphism between two representations is locally given by a holomorphic map $T: X^\times \to H$, and so by Hartogs' theorem it extends to $X$. 

If $X = \mathbb{C}^n$, then principal bundles over $X$ (and hence in the image of $R$) are trivializable. Conversely, let $(P, \phi) \in Rep(\Pi(X^\times,D^{\times}), H)$ by a representation such that $P = X^\times \times H$. Then the representation is given by a homomorphism $\phi: \Pi(X^\times,D^{\times}) \to H$. By Hartogs' theorem, it extends to a homomorphism $\phi: \Pi(X,D) \to H$. 
\end{proof}

\subsection*{Weighted homogeneous plane curves}
In this section, we assume that $X = \bb{C}^2$ and is equipped with a $\bb{C}^*$-action which is generated by 
\[
E = px \partial_{x} + q y \partial_{y},
\]
for positive integers $p$ and $q$. We further assume that $D = f^{-1}(0)$, for $f$ a weighted homogeneous function. This means that $E(f) = n f$, for a constant $n$. In this case, the twisted fundamental groupoid $\Pi(X,D)$ is a homogeneous groupoid by \cite[Theorem 3.6]{bischoff2022normal}. Assuming that $D$ is singular, the singularity locus consists of a single point $D^{s} = \{ (0,0) \}$. Hence $X^\times = \bb{C}^2 \setminus \{(0,0)\}. $ The following Theorem, a special case of Mebkhout's \cite[Theorem 10.3-3]{MR2077649}, states that the extension problem may always be solved for $(\mathbb{C}^2,D)$. 

\begin{theorem} \label{extensiontheorem}
Let $H$ be a connected complex reductive group. Let $D \subset \mathbb{C}^2$ be a weighted homogeneous plane curve and let $X^\times = \mathbb{C}^2 \setminus \{ (0,0) \}$. Then restricting representations defines an equivalence of categories 
\[
Rep(\Pi(\mathbb{C}^2,D), H) \cong Rep(\Pi(X^\times,D^{\times}), H).
\]
\end{theorem}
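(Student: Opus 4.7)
The plan is to follow the two-step strategy outlined in the introduction, proving the theorem first for $H = GL(m, \mathbb{C})$ and then bootstrapping to the general reductive case. By Lemma \ref{extensionlemma}, the restriction functor $R$ is automatically fully faithful, so what remains is essential surjectivity; and since $X = \mathbb{C}^2$, this reduces to showing that the principal $H$-bundle $P$ underlying any representation $(P, \phi) \in Rep(\Pi(X^\times, D^\times), H)$ is holomorphically trivializable on $X^\times$.

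For $H = GL(m, \mathbb{C})$, the key input is that $D$ being weighted homogeneous makes $\Pi(\mathbb{C}^2, D)$, and hence its restriction $\Pi(X^\times, D^\times)$, a homogeneous groupoid for the $\mathbb{C}^*$-action with weights $(p, q)$ (by \cite[Theorem 3.6]{bischoff2022normal}). Proposition \ref{inducedCstaraction} then upgrades the rank-$m$ vector bundle $P$ to a $\mathbb{C}^*$-equivariant bundle over $X^\times$. Because $\mathbb{C}^*$ acts freely on $X^\times = \mathbb{C}^2 \setminus \{0\}$, the quotient stack $[X^\times / \mathbb{C}^*]$ is the football orbifold $\mathbb{P}^1_{p,q}$, and $P$ descends to a holomorphic vector bundle $\overline{P}$ on $\mathbb{P}^1_{p,q}$.

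The second step applies the Martens--Thaddeus analogue of Grothendieck's splitting theorem \cite{martens2012variations}: $\overline{P}$ decomposes as a direct sum of holomorphic line bundles, and pulling this decomposition back gives a $\mathbb{C}^*$-equivariant splitting $P \cong \bigoplus_i L_i$ on $X^\times$. By Hartogs, any holomorphic line bundle on $X^\times$ extends across the origin to $\mathbb{C}^2$, where every line bundle is trivial, so each $L_i$ is trivializable, and therefore so is $P$. By Lemma \ref{extensionlemma} this concludes the argument for $GL(m, \mathbb{C})$.

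For general connected complex reductive $H$, choose a faithful representation $\rho : H \hookrightarrow GL(m, \mathbb{C})$ and form the associated vector bundle $E = P \times^\rho \mathbb{C}^m$ with its induced $GL(m, \mathbb{C})$-representation; by the case already handled, this representation extends over $(\mathbb{C}^2, D)$. The $H$-reduction of $E$ corresponds to a holomorphic section of the fibre bundle $\mathrm{Fr}(E)/H \to X^\times$, and since $H$ is reductive, Matsushima's theorem makes the fibre $GL(m, \mathbb{C})/H$ affine, so this section extends across the origin by Hartogs; transporting the representation along the extended reduction recovers the desired extension of $(P, \phi)$. The main obstacle I anticipate is the ``well-behaved monodromy'' caveat noted after Proposition \ref{inducedCstaraction}, which blocks a direct application of the homogeneous-groupoid machinery to general reductive $H$; funneling the argument through $GL$ via a faithful representation is precisely what sidesteps this issue.
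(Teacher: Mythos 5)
Your overall strategy coincides with the paper's: reduce to essential surjectivity via Lemma \ref{extensionlemma}, handle $H = GL(m,\mathbb{C})$ by combining Proposition \ref{inducedCstaraction} with the Martens--Thaddeus splitting over the football $\mathbb{P}_{p,q}$, and then treat general reductive $H$ by embedding into $GL(m,\mathbb{C})$ and using affineness of $GL(m,\mathbb{C})/H$ together with Hartogs to extend the reduction of structure group. That last reduction step is exactly the paper's argument (the paper just performs it first rather than last), and your remark about ``well-behaved monodromy'' correctly identifies why the detour through $GL$ is needed.

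There is, however, one genuine gap: the claim that ``by Hartogs, any holomorphic line bundle on $X^\times$ extends across the origin to $\mathbb{C}^2$'' is false. Hartogs extends holomorphic \emph{functions}, not line bundles, and in the analytic category the Picard group of $\mathbb{C}^2 \setminus \{0\}$ is far from trivial: the exponential sequence together with $H^1(X^\times,\mathbb{Z}) = H^2(X^\times,\mathbb{Z}) = 0$ (since $X^\times \simeq S^3$) gives $\Pic(X^\times) \cong H^1(X^\times, \mathcal{O}) \cong H^2_{\{0\}}(\mathbb{C}^2,\mathcal{O})$, which is infinite-dimensional. So a general holomorphic line bundle on the punctured plane does \emph{not} extend, and your argument that each summand $L_i$ is trivializable does not go through as stated. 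The correct conclusion uses the equivariance you have already established: by \cite[Proposition 2.2]{martens2012variations} the Picard group of $\mathbb{P}_{p,q}$ is generated by $\mathcal{O}(1)$, whose underlying holomorphic bundle on $X^\times$ is literally the trivial bundle $X^\times \times \mathbb{C}$ equipped with the twisted $\mathbb{C}^*$-action $\mu \ast (x,y,\lambda) = (\mu^p x, \mu^q y, \mu\lambda)$. Hence each $L_i \cong \mathcal{O}(n_i)$ is holomorphically trivial for the specific reason that every equivariant line bundle on the football has trivial underlying bundle, not because of any general extension principle. With this substitution the rest of your proof is correct.
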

\begin{proof}
By Lemma \ref{extensionlemma}, it suffices to show that for any $(P, \phi) \in Rep(\Pi(X^\times,D^{\times}), H)$, the underlying bundle $P$ is trivializable. Furthermore, it suffices to prove this in the case $H = GL(n, \mathbb{C})$. Indeed, for the general case, let $H \subseteq GL(n, \mathbb{C})$ be an embedding and let $Q = (P \times GL(n, \mathbb{C}))/H$ be the corresponding extension of structure group, so that $(Q, \phi) \in Rep(\Pi(X^\times,D^{\times}), GL(n, \mathbb{C}))$. Then given a trivialization $Q \cong X^\times \times GL(n, \mathbb{C})$, the subbundle $P$ gives rise to a holomorphic map $f_{P} : X^\times \to GL(n, \mathbb{C})/H$. Since $H$ is reductive, the quotient $GL(n, \mathbb{C})/H$ is an affine variety, and hence we may apply Hartogs' theorem to obtain an extension of $f_{P}$ to all $\mathbb{C}^2$. This implies that $P$ extends to a bundle over $\mathbb{C}^2$, which further implies that it is trivializable. 

Assume for the remainder of the proof that $H = GL(n, \mathbb{C})$. We will work with the associated vector bundle $V_{P} = (P \times \mathbb{C}^n)/H$, which is equivalent to $P$. By Proposition \ref{inducedCstaraction}, $V_{P}$ admits an equivariant action of $\mathbb{C}^*$, which lifts the given action on $X^\times$. The `quotient' $V_{P}/\mathbb{C}^*$ is a vector bundle over the `football' $\mathbb{P}_{p,q} = [X^\times/\mathbb{C}^*]$. This is an orbifold: it has coarse moduli space given by $\mathbb{P}^1$, and it has isotropy groups $\mathbb{Z}/p\mathbb{Z}$ over $[1,0]$ and $\mathbb{Z}/q\mathbb{Z}$ over $[0,1]$. We may therefore apply the Grothendieck decomposition theorem (i.e. \cite[Theorem 2.4]{martens2012variations}) to give a decomposition of $V_{P}$ as a direct sum of equivariant line bundles. More precisely, by \cite[Proposition 2.2]{martens2012variations}, the Picard group of $\mathbb{P}_{p,q}$ is generated by an equivariant line bundle $\mathcal{O}(1) = X^\times \times \mathbb{C}$, with $\mathbb{C}^*$-action given by 
\[
\mu \ast (x,y, \lambda) = (\mu^p x, \mu^q y, \mu \lambda). 
\]
Hence, we have a decomposition of $\mathbb{C}^*$-equivariant vector bundles over $X^\times$ given as follows
\[
V_{P} \cong \bigoplus_{i} \mathcal{O}(n_{i}).
\]
In particular, this implies that the underlying holomorphic vector bundle of $V_{P}$ is trivializable. 
\end{proof}
\begin{remark}
For general plane curves, a very similar result was proved by M. Kita in \cite[Proposition 2]{MR0541894}. However, his result is only stated for logarithmic connections obtained from the Deligne-Manin construction. 
\end{remark}
\begin{remark}
Although Theorem \ref{extensiontheorem} is stated for flat connections on $X^\times$, it is in fact a purely local statement which holds for logarithmic flat connections defined on a small punctured neighbourhood of the origin. This is because $\Pi(X^\times, D^\times)$ is Morita equivalent to $\Pi(W^\times, D^\times \cap W)$, for any polydisc $W$ centred at $(0,0)$. More generally, a similar extension result holds for hypersurfaces of the form $\mathbb{C}^k \times D \subset \mathbb{C}^{k + 2}$. 

On the other hand, the assumption that $H$ is reductive cannot be relaxed. For example, let $B \subset GL(2, \mathbb{C})$ be the Borel subgroup of upper triangular matrices. Since $Y = GL(2, \mathbb{C})/B$ is the total space of a $\mathbb{C}^*$-bundle over the projective line, it is not affine and hence we cannot apply Hartogs' theorem to $Y$-valued functions. For a counter-example to the theorem in this setting, consider a non-trivial short exact sequence of toric vector bundles on $\mathbb{CP}^1$: 
\[
0 \to \mathcal{O} \to \mathcal{O}(1)^{\oplus 2} \to \mathcal{O}(2) \to 0. 
\]
Let $P$ be the principal $B$-bundle of frames which split the flag $\mathcal{O} \to \mathcal{O}(1)^{\oplus 2}$. This defines a flat bundle on $X^\times$ with logarithmic poles along the coordinate hyperplanes $D^\times$. This bundle cannot be extended to $\mathbb{C}^2$. Otherwise, we could linearize it to obtain a reduction of structure group to $(\mathbb{C}^*)^2$ and this would induce an isomorphism between $\mathcal{O}(1)^{\oplus 2} $ and $\mathcal{O} \oplus \mathcal{O}(2)$. 
\end{remark}

\section{Castling equivalence} \label{CastlingEq}
In this section we explore another phenomenon which gives rise to bundles which do not extend. This is the phenomenon of castling equivalence for prehomogeneous vector spaces. 

Castling transformation is an operation on linear representations which was introduced by Sato and Kimura in their work \cite{MR430336} classifying irreducible prehomogeneous vector spaces. Recall that a prehomogeneous vector space consists of a linear representation $V$ of an algebraic group $G$ such that there is an open dense orbit. 

Let $G$ be a linear algebraic group, $V$ an $n$-dimensional linear representation and $1 \leq r < n$ an integer. Then $\Hom{\mathbb{C}^r, V}$ is a linear representation of $G \times SL(r, \mathbb{C})$ and $\Hom{\mathbb{C}^{n-r}, V^*}$ is a linear representation of $G \times SL(n-r, \mathbb{C})$. These two representations are said to be related by a castling transformation. More generally, two representations are castling equivalent if they are related by a sequence of castling transformations. In \cite{MR430336} it is shown that castling equivalent representations share a number of features, such as their algebras of polynomial relative invariants, their generic isotropy groups, and the property of being prehomogeneous. Essentially, this is due to the fact that the Grassmanians $Gr(r, V)$ and $Gr(n-r, V^*)$ are isomorphic. 

Let $(V, D)$ be a linear free divisor (cf. \cite[Section 3.2.2]{bischoff2022normal}) and let $G \subset GL(V)$ be the connected component of the group of linear transformations that preserve $D$. Then $(G, V)$ defines a prehomogeneous vector space. Conversely, given a prehomogeneous vector space $(G, V)$, it makes sense to ask whether the complement of the open dense orbit defines a linear free divisor. In \cite{MR2795728} it is shown that the property of defining a linear free divisor is preserved by castling transformations. Hence, we may talk about castling equivalence of linear free divisors. 

The following result shows that the twisted fundamental groupoids of castling equivalent free divisors are `birationally' Morita equivalent. 

\begin{theorem} \label{castleMorita}
Let $(X_{1}, D_{1})$ and $(X_{2}, D_{2})$ be castling equivalent linear free divisors, let $D_{i}^{s}$, $i = 1, 2$, denote the singular loci, and let $X^\times_{i} = X_{i} \setminus D_{i}^{s}$ and $D_{i}^{\times} = D_{i} \setminus D_{i}^{s}$. Then the twisted fundamental groupoids $\Pi(X^\times_{1}, D_{1}^{\times})$ and $\Pi(X^\times_{2}, D_{2}^{\times})$ are Morita equivalent. Furthermore, this Morita equivalence is unique up to rescaling of $X_{1}$. 
\end{theorem}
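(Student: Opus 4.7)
The approach is to construct a common geometric bibundle realising the Morita equivalence. Morita equivalences compose, so it suffices by induction on the length of the castling sequence to treat a single castling step; therefore assume that $X_{1} = \mathrm{Hom}(\mathbb{C}^{r}, V)$ carries its $G \times SL(r, \mathbb{C})$-action and $X_{2} = \mathrm{Hom}(\mathbb{C}^{n-r}, V^{*})$ carries its $G \times SL(n-r, \mathbb{C})$-action, where $V$ is an $n$-dimensional representation of $G$ and $1 \leq r < n$.

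The first step is to introduce the incidence variety
\[
Z = \{(f_{1}, f_{2}) \in X_{1}^{\times} \times X_{2}^{\times} \ : \ f_{2} \circ f_{1} = 0\},
\]
together with its two projections $p_{i} : Z \to X_{i}^{\times}$. Under the Grassmannian duality $\mathrm{Gr}(r, V) \cong \mathrm{Gr}(n-r, V^{*})$ the orbit stratifications of $X_{1}$ and $X_{2}$ are exchanged, and one verifies that on the open loci $p_{1}$ is a principal $GL(n-r, \mathbb{C})$-bundle and $p_{2}$ is a principal $GL(r, \mathbb{C})$-bundle, both surjective. Moreover $Z$ supports a natural action of $G \times SL(r, \mathbb{C}) \times SL(n-r, \mathbb{C})$, free modulo the residual diagonal rescaling $(f_{1}, f_{2}) \mapsto (\lambda f_{1}, \lambda^{-1} f_{2})$ by $\lambda \in \mathbb{C}^{*}$, which will account for the uniqueness ambiguity.

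The central step is the identification of the pullback groupoids $p_{1}^{!}\Pi(X_{1}^{\times}, D_{1}^{\times})$ and $p_{2}^{!}\Pi(X_{2}^{\times}, D_{2}^{\times})$ on $Z$. Since $(X_{i}, D_{i})$ is a linear free divisor, the logarithmic tangent algebroid is the action algebroid for its full symmetry group, and both pullbacks along $p_{i}$ are canonically identified with the action algebroid of $\mathfrak{g} \oplus \mathfrak{sl}(r, \mathbb{C}) \oplus \mathfrak{sl}(n-r, \mathbb{C})$ on $Z$. Because $SL(r, \mathbb{C})$ and $SL(n-r, \mathbb{C})$ are simply connected, the source-simply-connected integration on either side reduces to the action groupoid of $\widetilde{G} \times SL(r, \mathbb{C}) \times SL(n-r, \mathbb{C})$ on $Z$, where $\widetilde{G}$ denotes the source-simply-connected integration of the $\mathfrak{g}$-action on $V$. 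The two pullback groupoids therefore coincide, so that $Z$ inherits commuting principal actions of $\Pi(X_{1}^{\times}, D_{1}^{\times})$ and $\Pi(X_{2}^{\times}, D_{2}^{\times})$, exhibiting the desired Morita bibundle.

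For the uniqueness clause, any competing Morita bibundle differs from $Z$ by tensoring with a $\Pi(X_{1}^{\times}, D_{1}^{\times})$-equivariant line bundle on $X_{1}^{\times}$; the equivariant Picard group of this stratum is generated by a single $\mathbb{C}^{*}$-character corresponding precisely to rescaling of $X_{1}$, matching the residual scaling on $Z$. The step I expect to be the main obstacle is the central algebroid identification on $Z$: one must verify that the span of the fundamental vector fields for the combined $G \times SL(r, \mathbb{C}) \times SL(n-r, \mathbb{C})$-action on $Z$ coincides with both pulled-back logarithmic tangent bundles. This verification relies essentially on the castling-invariance of the linear-free-divisor property established in \cite{MR2795728} and on the matching of generic isotropies across the two sides.
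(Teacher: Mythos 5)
There is a genuine gap at the step you yourself flag as the main obstacle: the claim that both pullback groupoids $p_{1}^{!}\Pi(X_{1}^{\times},D_{1}^{\times})$ and $p_{2}^{!}\Pi(X_{2}^{\times},D_{2}^{\times})$ coincide with the action groupoid of $\widetilde{G}\times SL(r,\mathbb{C})\times SL(n-r,\mathbb{C})$ on $Z$ is false, already on dimension grounds. For a linear free divisor one has $\dim G + r^{2}-1 = rn$, so $\dim Z = rn + (n-r)^{2}$ while $\dim\bigl(\widetilde{G}\times SL(r,\mathbb{C})\times SL(n-r,\mathbb{C})\bigr) = rn + (n-r)^{2}-1 = \dim Z - 1$; this group has no open orbit on $Z$, its action groupoid has dimension $2rn+2(n-r)^{2}-1$, and its action algebroid has rank one less than either pulled-back logarithmic tangent algebroid, whose rank is $rn+(n-r)^{2}$. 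The source of the discrepancy is that $p_{1}$ is a principal $GL(n-r,\mathbb{C})$-bundle, not an $SL(n-r,\mathbb{C})$-bundle: an arrow $(g,s)$ of $\Pi(X_{1}^{\times},D_{1}^{\times})$ taking $f_{1}'$ to $f_{1}$ lifts to your action groupoid on $Z$ only when the induced automorphism of $\mathrm{Ann}(\mathrm{im}\,f_{1}')$ has unit determinant, i.e.\ only when $\det(g)=1$, so the natural comparison map from the action groupoid into $p_{1}^{!}\Pi(X_{1}^{\times},D_{1}^{\times})$ is injective but not surjective. Each pullback groupoid exceeds your candidate by one central direction, and crucially these two directions are \emph{different}: one lies along the fibres of $p_{1}$ (the centre of $GL(n-r,\mathbb{C})$), the other along the fibres of $p_{2}$ (the centre of $GL(r,\mathbb{C})$). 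Identifying them is not automatic and is exactly where the content of the theorem lies.

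This reconciliation of central characters is what the paper's proof actually does: it passes to the quotients $L_{r,n}=I_{r,n}/SL(r,\mathbb{C})$ and $L_{n-r,n}$, identifies them via the determinant of the tautological sequence $0\to T_{r}\to V\times Gr(r,V)\to A^{*}T_{n-r}^{*}\to 0$ after trivialising $\det(V)$, observes that the two $\widetilde{G}$-actions then differ by the determinant character, and corrects for this by splitting $\widetilde{G}\cong\mathbb{C}\times K$ and rescaling the central coordinate by $r/(r-n)$. That rescaling is also the precise origin of the clause ``unique up to rescaling of $X_{1}$'' (the only choice is the volume form on $V$); your appeal to an equivariant Picard group is asserted rather than proved and does not engage with this. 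The bibundle $Z$ is a reasonable geometric starting point and is closely related to the paper's construction (its quotient by $SL(r,\mathbb{C})\times SL(n-r,\mathbb{C})$ is the fibre product $L_{r,n}\times_{Gr(r,V)}L_{n-r,n}$), but as written the proposal omits the determinant bookkeeping that is the essential ingredient of the proof.
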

\begin{proof}
Let $(G \times SL(r, \mathbb{C}), X_{1} = \Hom{\mathbb{C}^r, V})$ and $(G \times SL(n-r, \mathbb{C}), X_{2} = \Hom{\mathbb{C}^{n-r}, V^*})$ be two representations related by a castling transformation. 
Let $I_{r, n} \subset \Hom{\mathbb{C}^r, V}$ be the subspace of injective linear transformations. This is a $G \times SL(r, \mathbb{C})$-invariant subset, which is the complement of a determinental variety $B_{r,n}$ of codimension $n - r + 1 \geq 2$. It follows that both $I_{r,n}$ and $B_{r,n}$ are unions of orbits. If $(G \times SL(r, \mathbb{C}), \Hom{\mathbb{C}^r, V})$ defines a linear free divisor, then for dimension reasons $B_{r,n} \subseteq D^{s}$ and hence $X^\times_{1} \subseteq I_{r,n}$. Furthermore, $\Pi(X_{1}, D_{1}) = (\tilde{G} \times SL(r, \mathbb{C})) \ltimes \Hom{\mathbb{C}^r, V}$, where $\tilde{G}$ is the universal cover of $G$. Analogous statements hold for $(G \times SL(n-r, \mathbb{C}), X_{2} = \Hom{\mathbb{C}^{n-r}, V^*})$. 

We will construct a Morita equivalence between the subgroupoids $(\tilde{G} \times SL(r, \mathbb{C})) \ltimes I_{r,n}$ and $(\tilde{G} \times SL(n-r, \mathbb{C})) \ltimes I_{n-r,n}$, and then show that this induces a Morita equivalence between $\Pi(X^\times_{1}, D_{1}^{\times})$ and $\Pi(X^\times_{2}, D_{2}^{\times})$. 

First, the group $GL(r, \mathbb{C})$ acts freely on $I_{r,n}$ with quotient $Gr(r, V)$, the Grassmanian of $r$-planes in $V$. It follows that the subgroup $SL(r, \mathbb{C})$ also acts freely on $I_{r,n}$. The quotient $L_{r, n} = I_{r,n}/SL(r, \mathbb{C})$ is a principal $\mathbb{C}^*$-bundle over $Gr(r,V)$, identified with the determinant of the tautological rank $r$ vector bundle $T_{r}$. This induces a Morita equivalence between $(\tilde{G} \times SL(r, \mathbb{C})) \ltimes I_{r,n}$ and $\tilde{G} \ltimes L_{r,n}$. Similarly, we have a Morita equivalence between $(\tilde{G} \times SL(n-r, \mathbb{C})) \ltimes I_{n-r,n}$ and $\tilde{G} \ltimes L_{n-r,n}$. We will obtain the desired Morita equivalence by constructing an isomorphism between $L_{r, n}$ and $L_{n-r, n}$. 

The bundles $L_{r,n}$ and $L_{n-r, n}$ have respective bases $Gr(r,V)$ and $Gr(n-r,V^*)$. There is a canonical $\tilde{G}$-equivariant isomorphism between these spaces, given by sending an $r$-plane to its annihilator: 
\[
A: Gr(r,V) \to Gr(n-r ,V^*), \qquad W \mapsto Ann(W). 
\]
Let $T_{r}$ be the canonical $r$-plane bundle over $Gr(r,V)$, and let $T_{n-r}$ be the canonical dual $n-r$-plane bundle over $Gr(n-r,V^*)$. These two bundles are related by the following short exact sequence over $Gr(r,V)$: 
\[
0 \to T_{r} \to V \times Gr(r,V) \to A^{*}T^*_{n-r} \to 0. 
\]
This is a sequence of $\tilde{G}$-equivariant bundles once we equip the trivial bundle $V \times Gr(r,V)$ with the action 
\[
g \ast (v, W) = (g(v), g(W)).  
\]
Taking the determinant of the sequence yields the $\tilde{G}$-equivariant isomorphism 
\[
\phi: L_{r,n} \cong \det(V) \otimes A^{*}(L_{n-r, n}). 
\]
If we choose a volume form on $V$, then $\det(V) \cong \mathbb{C}$, and hence $L_{r,n} \cong L_{n-r, n}$. However, the two actions of $\tilde{G}$ on $L_{r,n}$ differ by the determinant character and this must be corrected. To this end, consider the homomorphism $\tilde{G} \times SL(r, \mathbb{C}) \to GL(X_{1})$ and let $H_{1}$ be its image. Then $\tilde{G} \times SL(r, \mathbb{C}) \to H_{1}$ is the universal cover. Because $H_{1}$ is the connected subgroup of linear transformations that preserve $D_{1}$, it contains a central factor of $\mathbb{C}^{*}$ acting as scaling transformations. This lifts to a homomorphism into the centre $\mathbb{C} \to Z(\tilde{G} \times SL(r, \mathbb{C}))$. Because the centre of $SL(r, \mathbb{C})$ is finite, $\mathbb{C}$ maps purely into $\tilde{G}$. It then follows that $z \in \mathbb{C}$ acts on $V$ as scalar multiplication by $e^{z}$. Consider the character $\det: \tilde{G} \to \mathbb{C}^*$ which is obtained from the action of $G$ on $\det(V)$. This is surjective because $z \in \mathbb{C}$ is sent to $e^{nz}$. Let $K \subset \tilde{G}$ be the connected component of the kernel. Then $\tilde{G} \cong \mathbb{C} \times K$.  Because $K$ is the kernel of the determinant character, the above isomorphism $\phi$ between $L_{r,n}$ and $L_{n-r,n}$ is $K$-equivariant. Furthermore, $z \in \mathbb{C}$ acts on $L_{r,n}$ by multiplication by $e^{rz}$ and on $L_{n-r,n}$ by multiplication by $e^{(r-n)z}$. We therefore have an isomorphism given by 
\[
(\mathbb{C} \times K) \ltimes L_{r,n} \to (\mathbb{C} \times K) \ltimes L_{n-r,n}, \qquad (z, k, v) \mapsto (\frac{r}{r-n} z, k, \phi(v)). 
\]
Note that the only choice in constructing this isomorphism is the trivialisation of $\det(V)$. The different choices are obtained by rescaling, which may be induced by rescaling $X_{1}$. 
A Morita equivalence between groupoids induces a bijection between orbits, and isomorphisms of the corresponding isotropy groups. Therefore, the bijection preserves the codimension of the orbits. Hence, it restricts to a Morita equivalence between $\Pi(X^\times_{1}, D_{1}^{\times})$ and $\Pi(X^\times_{2}, D_{2}^{\times})$. 
\end{proof}

\begin{corollary} \label{castlingrepcat}
Let $\mathcal{C}$ denote a castling equivalence class of linear free divisors. For every $(X, D) \in \mathcal{C}$, the category of $H$-representations $\Rep(\Pi(X^\times, D^{\times}), H)$ only depends on the class $\mathcal{C}$. Hence, we denote it $\Rep(\mathcal{C}, H)$. Furthermore, there is a fully faithful functor
\[
Rep(\Pi(X,D), H) \to \Rep(\mathcal{C}, H). 
\]
In other words, the representations of castling equivalent linear free divisors all embed into a common category. 
\end{corollary}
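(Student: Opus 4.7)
The plan is to derive Corollary \ref{castlingrepcat} as a formal consequence of Theorem \ref{castleMorita} combined with Lemma \ref{extensionlemma}, together with the standard fact that a Morita equivalence of (holomorphic) Lie groupoids induces an equivalence on their categories of $H$-representations (implemented by pullback/tensor product along the bibundle realising the Morita equivalence).

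For the first assertion, I would argue as follows. Given any two representatives $(X_1, D_1), (X_2, D_2) \in \mathcal{C}$, by definition there is a finite chain of castling transformations linking them. Applying Theorem \ref{castleMorita} at each step produces a chain of Morita equivalences between the corresponding twisted fundamental groupoids $\Pi(X_i^\times, D_i^\times)$, and each of these induces an equivalence of the associated $H$-representation categories. Composing through the chain yields an equivalence
\[
Rep(\Pi(X_1^\times, D_1^\times), H) \simeq Rep(\Pi(X_2^\times, D_2^\times), H).
\]
The essential uniqueness clause of Theorem \ref{castleMorita} — Morita equivalence is unique up to rescaling of $X_1$ — ensures that this equivalence is canonical up to an ambiguity which does not affect the underlying category, which justifies the notation $\Rep(\mathcal{C}, H)$.

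For the second assertion, Lemma \ref{extensionlemma} supplies, for each $(X, D) \in \mathcal{C}$, a fully faithful restriction functor $R : Rep(\Pi(X,D), H) \to Rep(\Pi(X^\times, D^\times), H)$. Composing $R$ with the equivalence $Rep(\Pi(X^\times, D^\times), H) \simeq \Rep(\mathcal{C}, H)$ from the preceding paragraph gives the desired fully faithful functor $Rep(\Pi(X,D), H) \to \Rep(\mathcal{C}, H)$.

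No serious obstacle is anticipated: the corollary is essentially a repackaging of the two preceding results. The only ingredient used implicitly — that Morita equivalence of holomorphic Lie groupoids induces an equivalence of representation categories — is classical and fits naturally inside the groupoid-theoretic framework already invoked throughout the paper.
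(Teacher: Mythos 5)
Your proposal is correct and follows exactly the route the paper intends (the paper leaves the proof implicit): Theorem \ref{castleMorita} applied along a chain of castling transformations gives Morita equivalences of the groupoids $\Pi(X^\times, D^\times)$, hence equivalences of their $H$-representation categories, and composing with the fully faithful restriction functor $R$ of Lemma \ref{extensionlemma} yields the embedding. Nothing is missing.
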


Let $(V, D)$ be a linear free divisor and assume that the dimension of $V$ is $n \geq 3$. Let $G$ be the connected group of linear transformations which preserve $D$. Then $G = G \times SL(1, \mathbb{C})$ and $V = \Hom{\mathbb{C}, V}$. Hence there is a castling transformation between $(G, V)$ and $(G \times SL(n-1, \mathbb{C}), \Hom{\mathbb{C}^{n-1}, V^{*}})$. In other words, the $n$-dimensional linear free divisor $(V,D)$ is castling equivalent to a linear free divisor $(V', D')$ of dimension $n(n-1)$. As observed in \cite{MR2521436}, this provides a method for constructing infinitely many new castling equivalent linear free divisors of increasing dimensions. The next proposition shows that this also gives rise to a method for constructing non-extendable flat logarithmic connections. 

In order to state the proposition, we need to recall the \emph{residue} of a logarithmic flat connection. Recall that 
\[
\Pi(V', D') \cong (\tilde{G} \times SL(n-1,\mathbb{C})) \ltimes V',
\]
where $\tilde{G}$ is the universal cover of $G$. The origin $0 \in V'$ is an orbit of the groupoid whose isotropy group is $\tilde{G} \times SL(n-1, \mathbb{C})$. Hence, given an $H$-representation $(\phi, P)$ of $\Pi(V', D')$, we can restrict it to the origin to get an $H$-representation of $\tilde{G} \times SL(n-1,\mathbb{C})$. This is the residue, which  can be further restricted to a residual $SL(n-1,\mathbb{C})$ action.

\begin{proposition} \label{castlingembedding}
Given a linear free divisor $(V, D)$ of dimension $dim(V) = n \geq 3$, let $(V',D')$ denote the linear free divisor associated to the castling transform $(G \times SL(n-1, \mathbb{C}), \Hom{\mathbb{C}^{n-1}, V^{*}})$, where $G$ is the connected group of linear transformations of $V$ that preserve $D$. Then there is a fully faithful functor 
\[
F: \Rep(\Pi(V,D), H) \to \Rep(\Pi(V',D'), H)
\]
whose essential image consists of the representations with trivial residual $SL(n-1, \mathbb{C})$ action. 
\end{proposition}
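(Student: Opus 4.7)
The plan is to define $F$ as a composition of three functors: the restriction $\Rep(\Pi(V, D), H) \to \Rep(\Pi(V^\times, D^\times), H)$, the Morita equivalence $\Rep(\Pi(V^\times, D^\times), H) \simeq \Rep(\Pi(V'^\times, D'^\times), H)$ provided by Theorem \ref{castleMorita}, and the extension functor $\Rep(\Pi(V'^\times, D'^\times), H) \to \Rep(\Pi(V', D'), H)$ inverse to restriction. To verify the composition is well-defined, note that since $SL(1, \mathbb{C})$ is trivial we have $\Pi(V, D) = \tilde{G} \ltimes V$, so any representation of $\Pi(V, D)$ has trivializable underlying bundle. The proof of Theorem \ref{castleMorita} realises the Morita equivalence through the common quotient $L_{1, n} \cong L_{n-1, n}$, so the bundle on $V'^\times = I_{n-1, n}$ obtained by transport is the pullback along the principal $SL(n-1, \mathbb{C})$-bundle $\pi \colon I_{n-1, n} \to L_{n-1, n}$ of the bundle which, under this identification, coincides with the trivializable bundle $P|_{V^\times}$. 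This pullback is itself trivializable, so Lemma \ref{extensionlemma} supplies the unique extension to $V'$. Full faithfulness of $F$ follows as a composition of fully faithful functors.

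Next I would show that the essential image lies in the subcategory with trivial residual $SL(n-1, \mathbb{C})$-action. In the trivialization of $\pi^* P|_{V^\times}$ induced by the pullback structure, $SL(n-1, \mathbb{C})$ acts purely on the base $I_{n-1, n}$, so the associated cocycle $\alpha \colon SL(n-1, \mathbb{C}) \times I_{n-1, n} \to H$ is identically the identity. By uniqueness of the Hartogs extension of the full $(\tilde{G} \times SL(n-1, \mathbb{C}))$-representation to $V'$, the extended $SL(n-1, \mathbb{C})$-cocycle on $V'$ is again identically the identity; in particular its value at $0$, the residual $SL(n-1, \mathbb{C})$-action, is trivial.

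Essential surjectivity is the delicate step. Given $(P'', \phi'') \in \Rep(\Pi(V', D'), H)$ with trivial residual $SL(n-1, \mathbb{C})$-action, I would restrict to $V'^\times$ and apply the inverse Morita equivalence to produce a bundle $Q$ on $V^\times$; by Lemma \ref{extensionlemma} it suffices to show that $Q$ is trivializable. My approach is to exploit the $\mathbb{C}^*$-scaling coming from the centre of $\tilde{G}$: since $\tilde{G}$ and $SL(n-1, \mathbb{C})$ appear as commuting direct factors in the isotropy at $0$, their lifts to $P''$ commute, and the scaling contracts $V'$ onto the origin. Combined with the triviality of the $SL(n-1, \mathbb{C})$-action on $P''_0$, this should force the $SL(n-1, \mathbb{C})$-cocycle on $P''$ to be a coboundary globally, so that in a suitable gauge $SL(n-1, \mathbb{C})$ acts purely on the base over all of $V'$. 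The $SL(n-1, \mathbb{C})$-quotient of $P''|_{V'^\times}$ in this gauge is then a trivializable bundle on $L_{n-1, n} \cong V^\times$, as required.

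The main obstacle will be this globalisation step. In the holomorphic category a reductive group action with a fixed point linearises in a neighbourhood of that point (by a Bochner-type argument, since $SL(n-1, \mathbb{C})$ is reductive) but not globally in general. One must therefore propagate the trivialisation of the $SL(n-1, \mathbb{C})$-cocycle from a neighbourhood of $0$ to the whole of $V'$ using the contracting $\mathbb{C}^*$-flow; concretely I would try to build the gauge $\beta \colon V' \to H$ as an appropriate $\mathbb{C}^*$-limit using the commutation $\tilde{G} \times SL(n-1, \mathbb{C})$, and then verify the cocycle identity $\alpha(g, v) = \beta(gv)\beta(v)^{-1}$ globally by analytic continuation.
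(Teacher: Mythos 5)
Your overall route coincides with the paper's: realise $F$ as transport along the Morita equivalence of Theorem \ref{castleMorita} (which over $V'^\times = I_{n-1,n}$ is just pullback along the principal $SL(n-1,\mathbb{C})$-bundle $\pi : I_{n-1,n} \to L_{n-1,n} \cong V\setminus\{0\}$), note that pullbacks of trivializable bundles are trivializable so Lemma \ref{extensionlemma} supplies the extension and full faithfulness, and observe that in the pullback trivialization the $SL(n-1,\mathbb{C})$-cocycle is identically $1$, so the residual action is trivial. All of that is correct and matches the paper.

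The gap is exactly where you flag it: essential surjectivity. You correctly identify that one must pass from triviality of the residual $SL(n-1,\mathbb{C})$-action at the fixed point $0$ to a \emph{global} gauge on $V'$ in which the $SL(n-1,\mathbb{C})$-cocycle is identically $1$, and you correctly observe that a Bochner-type linearisation is only local, proposing to globalise it by a $\mathbb{C}^*$-limit along the contracting scaling flow. But this is left as a programme ("I would try to build the gauge $\beta$ as an appropriate $\mathbb{C}^*$-limit"), not an argument: you do not establish that the limit exists, is holomorphic in $v$, or satisfies the coboundary identity, and these are precisely the nontrivial analytic points. The paper does not reprove this; it invokes the linearisation theorem for positive homogeneous groupoids (\cite[Theorem 4.8]{bischoff2022normal}), which yields a trivialization of $P$ over all of $V'$ with $\phi(1,s,v)=1$ for $s\in SL(n-1,\mathbb{C})$, after which the quotient $P|_{I_{n-1,n}}/SL(n-1,\mathbb{C})$ is visibly the trivial bundle on $V\setminus\{0\}$ and Lemma \ref{extensionlemma} finishes the argument. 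Your proposed contraction argument is in spirit what that theorem encapsulates, but as written your proof is incomplete at this step; to close it you should either cite a global linearisation/equivariant trivialisation result for reductive group actions on Stein contractible spaces (or for positive homogeneous groupoids), or actually carry out the limit construction and verify holomorphy and the cocycle identity.
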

\begin{proof}
The proof of Theorem \ref{castleMorita} constructs a Morita equivalence between $\Pi(I_{1,n}, D)$ and $\Pi(I_{n-1, n}, D')$ and this induces an equivalence $\tilde{F}$ between their categories of representations. In our case, $V \setminus \{0\} = I_{1, n} = L_{1,n}$ and $I_{n-1, n}$ is a principal $SL(n-1, \mathbb{C})$-bundle over $L_{n-1, n} \cong L_{1, n}$. Let $\pi : I_{n-1, n} \to V \setminus \{0\}$ be the bundle projection map. Then the functor $\tilde{F}$ is simply given by pulling a representation back along $\pi^*$. Since the pullback sends the trivial bundle on $I_{1,n}$ to the trivial bundle on $I_{n-1, n}$, by Lemma \ref{extensionlemma} $\tilde{F}$ restricts to the desired fully faithful functor $F$. 

It remains to determine the essential image of $F$. First, note that the for the trivial bundle 
\[
\pi^{*}(I_{1,n} \times H) = I_{n-1, n} \times H
\]
the group $SL(n-1, \mathbb{C})$ acts only on the first factor. This remains the case for the extension of this bundle to $V'$. Hence, the residual action of $SL(n-1, \mathbb{C})$ is trivial. In fact, this is true for all representations in the image of $F$, since they are obtained by pulling back \emph{trivializable} bundles. 

Conversely, let $(P, \phi) \in  \Rep(\Pi(V',D'), H)$ be a representation with trivial residual $SL(n-1, \mathbb{C})$ action. By linearizing the action of $SL(n-1, \mathbb{C}) \ltimes V'$ (eg. via \cite[Theorem 4.8]{bischoff2022normal}), we may choose a trivialization of $P$ such that $\phi: (\tilde{G} \times SL(n-1,\mathbb{C})) \ltimes V' \to H$ satisfies 
\[
\phi(1,s,v) = 1,
\]
for $s \in SL(n-1,\mathbb{C})$ and $v \in V'$. In other words, the action of $SL(n-1, \mathbb{C})$ on $P|_{I_{n-1,n}} \cong I_{n-1, n} \times H$ is trivial on the second factor. Hence, $\tilde{F}^{-1}(P, \phi)$ has underlying bundle given by 
\[
P|_{I_{n-1,n}}/SL(n-1,\mathbb{C}) \cong (I_{n-1, n} \times H)/SL(n-1,\mathbb{C}) = I_{n-1, n}/SL(n-1,\mathbb{C}) \times H = (V \setminus \{0\}) \times H.
\]
This is trivializable and hence $\tilde{F}^{-1}(P, \phi) \in \Rep(\Pi(V,D), H)$.
\end{proof}

\begin{remark}
Note that for a fixed structure group $H$, the sequence of embeddings obtained by repeated application of Proposition \ref{castlingembedding} will eventually stabilise. This is because the dimension of the extra factor of $SL(n, \mathbb{C})$ which gets added at each stage grows quickly, and hence eventually there are no non-trivial morphisms to $H$. In other words, after a certain stage, the functor $F: \Rep(\Pi(V,D), H) \to \Rep(\Pi(V',D'), H)$ becomes an equivalence.
\end{remark}

\begin{example}
Consider the linear free divisor $(\mathbb{C}^3, D)$, where $D$ is the union of coordinate hyperplanes, which is cut out by $xyz = 0$. The connected linear group preserving $D$ is $(\mathbb{C}^*)^3$. Hence, $(\mathbb{C}^3, D)$ is castling equivalent to the prehomogeneous vector space $((\mathbb{C}^*)^3 \times SL(2, \mathbb{C}), \Hom{\mathbb{C}^2, \mathbb{C}^3})$. This defines a linear free divisor $(\mathbb{C}^6, D')$, where $D'$ is the hypersurface cut out by the vanishing of 
\[
f(u_{1},u_{2},v_{1},v_{2},w_{1},w_{2}) = (u_{1}v_{2} - u_{2}v_{1})(v_{1}w_{2} - v_{2}w_{1})(w_{1}u_{2} - w_{2}u_{1}).
\]
By Proposition \ref{castlingembedding} we have an embedding 
\[
\Rep(\Pi(\mathbb{C}^3, D), H) \to \Rep(\Pi(\mathbb{C}^6,D'), H). 
\]
\end{example}

One upshot of Proposition \ref{castlingembedding} is a method for constructing non-extendable logarithmic flat connections. Let $(V, D)$ be a linear free divisor with $\dim(V) = n \geq 3$ and let $(V', D')$ be the castling equivalent free divisor constructed as in Proposition \ref{castlingembedding}. A non-trivial homomorphism $\psi : SL(n-1, \mathbb{C}) \to H$ determines a representation of $\Pi(V', D')$ as follows 
\[
\phi : (\tilde{G} \times SL(n-1,\mathbb{C})) \ltimes V' \to H, \qquad (g,s,v) \mapsto \psi(s). 
\]
Since this has non-trivial residual $SL(n-1, \mathbb{C})$ action, it is not in the image of $F$. On the other hand, by Theorem \ref{castleMorita} it does determine a logarithmic flat connection $(E, \nabla)$ on $(V^\times, D^\times)$, which necessarily does not extend to $(V,D)$.

 \bibliographystyle{plain}

 \bibliography{bibliography.bib}

\end{document}